\DeclarePairedDelimiter{\abs}{\lvert}{\rvert}
\DeclareMathOperator{\real}{\mathrm{Re}}
\DeclareMathOperator{\im}{\mathrm{Im}}
\numberwithin{equation}{section}
\theoremstyle{plain} \newtheorem{thm}{Theorem}[section]
\newtheorem{lemma}[thm]{Lemma}
\newtheorem{prop}[thm]{Proposition}
\newtheorem{corollary}[thm]{Corollary}
\newtheorem{fact}[thm]{Fact}
\theoremstyle{definition}\newtheorem{ex}[thm]{Example}
\newtheorem{remark}[thm]{Remark}
\newtheorem{definition}[thm]{Definition}
\newtheorem{setup}[thm]{Set-Up}
\newtheorem{construction}[thm]{Construction}
\newtheorem*{convention*}{Convention}
\newtheorem*{acknowledgements*}{Acknowledgements}
\DeclareMathOperator{\Res}{Res}
\DeclareMathOperator{\rank}{rk}
\DeclareMathOperator{\gr}{gr}
\newcommand{\can}{\omega_{Y/X}}
\newcommand{\D}{\Delta}
\newcommand{\cM}{\mathcal{M}}
\newcommand{\cV}{\mathcal{V}}
\newcounter{tmp}
\begin{document}

\title{Hodge modules and singular hermitian metrics}

\author{Christian Schnell and Ruijie Yang}

\maketitle

%\thispagestyle{empty}

%\newpage

\iffalse
\section*{Abstract}

The purpose of this paper is to study certain notions of metric positivity called ``minimal extension property'' for the lowest nonzero piece in the Hodge filtration of a Hodge module. Let $X$ be a complex manifold and let $\cM$ be a polarized pure Hodge module on $X$ with strict support $X$. Let $F_p\cM$ be the smallest nonzero piece in the Hodge filtration. Assume that $\cM$ is smooth outside a closed analytic subset $Z$ and let $j:X\setminus Z \hookrightarrow X$ be the open embedding. Let $h$ be the smooth hermitian metric on $F_p\cM|_{X\setminus Z}$ induced by the polarization. We show that the canonical morphism of $\mathcal{O}_X$-modules
\[ F_p\cM \to j_{\ast}(F_p\cM|_{X\setminus Z})\]
induces an isomorphism between $F_p\cM$ and the subsheaf of $j_{\ast}(F_p\cM|_{X\setminus Z})$ consisting of sections which are locally $L^2$ near $Z$ with respect to $h$ and the standard Lebesgue measure on $X$. In particular, $h$ extends to a singular hermitian metric on $F_p\cM$ with minimal extension property.

%Let $X$ be a complex manifold and $\cV$ is a polarized rational variation of Hodge structures. Let $(E,h)$ be the lowest nonzero piece in the Hodge filtration of $\cV$ equipped with the Hodge metric. We show that the Hodge metric $h$ satisfies the minimal extension property. In particular, this singular hermitian metric has semi-positive curvature.

\textbf{Keywords}: Metric positivity, Ohwasa-Takegoshi extension theorems, variation of Hodge structures, Hodge modules

\section*{Declarations}

\textbf{Funding:} not applicable.

\textbf{Conflicts of interest/Competing interests:} not applicable.

\textbf{Availability of data and material:} not applicable.

\textbf{Code availability:} not applicable

\textbf{Authors' contributions:} not applicable.

\vspace{1cm}

MSC code: 14D07, 32L05 \\

\newpage

\fi

\section*{Introduction}

Motivated by the Iitaka conjecture, there has been a lot of interests in studying the positivity of direct images of relative pluri-canonical bundles using Hodge theory. Among results about algebraic positivity, it is known that they are weakly positive \cite{Viehweg}. Recently, the authors in \cite{CP,HPS} emphasize a different aspect of positivity from the metric point of view. Let $f:Y\to X$ be a projective and surjective holomorphic map between complex manifolds. Given a holomorphic line bundle $(L,h)$ on $Y$, where $h$ is a singular hermitian metric with semi-positive curvature, the authors in \cite{HPS} construct a singular hermitian metric on $f_{\ast}(\omega_{Y/X}\otimes L \otimes \mathcal{I}(h))$. Furthermore, they show that this new metric satisfies the ``minimal extension property'' (see Section~\ref{section:metric positivity}). In particular, it has semi-positive curvature, which generalizes the work of \cite{PT}. This metric positivity then implies Viehweg's results on weak positivity by \cite[Theorem 2.5.2]{PT}.

Let $k$ be the dimension of a general fiber of $f$ and let $(L,h)$ be $\mathcal{O}_Y$ equipped with the trivial metric. The construction in \cite{HPS} gives a singular hermitian metric on $f_{\ast}(\can)$, which is the lowest piece in the Hodge filtration of $R^kf_{\ast}(\underline{\mathbb{C}})$. This singular hermitian metric on $f_{\ast}(\can)$ actually comes from the Hodge metric on the smooth part of the fibration. It is natural to ask whether or not the Hodge metric on the lowest pieces in the Hodge filtration of \textit{any} polarized variation of rational Hodge structures satisfies the ``minimal extension property''. In this paper, we would like to give an affirmative answer using the language of Hodge modules.

Let $X$ be a complex manifold and let $\cM$ be a polarized pure Hodge module on $X$ with strict support $X$. Let $p$ be the smallest integer such that $F_p\cM \neq 0$. Assume that $\cM$ is smooth (i.e. the perverse sheaf underlying $\cM$ is a local system up to a shift) outside a closed analytic subset $Z\subsetneq X$ and let $j:X\setminus Z \hookrightarrow X$ be the open embedding. The polarization on $\cM|_{X\setminus Z}$ provides the vector bundle $F_p\cM|_{X\setminus Z}$ with a smooth hermitian metric $h$ (called Hodge metric). Our main result is
\begingroup
\setcounter{tmp}{\value{thm}}
\setcounter{thm}{0} 
\renewcommand\thethm{\Alph{thm}}
\begin{thm}\label{thm:main}The canonical morphism of $\mathcal{O}_X$-modules
\[ F_p\cM \to j_{\ast}(F_p\cM|_{X\setminus Z})\]
induces an isomorphism between $F_p\cM$ and the subsheaf of $j_{\ast}(F_p\cM|_{X\setminus Z})$ consisting of sections of $F_p\cM|_{X\setminus Z}$ which are locally $L^2$ near $Z$ with respect to $h$ and the standard Lebesgue measure on $X$.
\end{thm}
\endgroup

One can also formulate this in the classical language of variations of Hodge structures. Let $Z\subsetneq X$ be a closed analytic subset of $X$ and let $j:X\setminus Z \hookrightarrow X$ be the open embedding. Let $\cV$ be a polarized variation of rational Hodge structures on $X\setminus Z$. Saito shows that $\cV$ uniquely extends to a polarized pure Hodge module $\cM$ on $X$ with strict support $X$.  Let $E$ be the holomorphic vector bundle corresponding to the lowest nonzero piece in the Hodge filtration of $\cV$ and let $(\cM,F_{\bullet}\cM)$ be the filtered $D_{X}$-module underlying $\cM$. Saito also shows that $E=F_p\cM|_{X\setminus Z}$, where $F_{p}\cM$ is a torsion-free sheaf on $X$ (see Section~\ref{section:HM}). The canonical morphism of $\mathcal{O}_X$-modules in Theorem \ref{thm:main} corresponds to
\[ F_p\cM  \to j_\ast E. \]
Since the Hodge metric $h$ is known to be Nakano semi-positive, as a consequence of the sharp Ohsawa-Takegoshi Theorem \cite{Blocki, GZ}, we have
\begingroup
\setcounter{tmp}{\value{thm}}
\setcounter{thm}{1} 
\renewcommand\thethm{\Alph{thm}}
\begin{corollary} \label{cor:minimal extension property}With the notation above, the Hodge metric $h$ extends as a singular Hermitian metric on $F_p\cM$ which satisfies the ``minimal extension property" and has semi-positive curvature.
\end{corollary}
\endgroup
If one assumes $D$ is simple normal crossing, the curvature property of the Hodge metric for a $\mathbb{C}$-VHS is proven by Brunebarbe \cite[Theorem 1.4]{Br}.  

The notion of ``minimal extension property'' arises from the work of Ohsawa-Takegoshi with sharp estimates by \cite{Blocki, GZ} and it is a slight strengthening of Griffiths semi-positivity for singular hermitian metrics (i.e. with semi-positive curvature). The key point of this notion is about the ability to extend sections over the singular locus of torsion-free sheaves while controlling $L^2$ norms in a precise way. This notion plays an important role in the proof of the Iitaka conjecture for algebraic fiber spaces over abelian varieties \cite{CP,HPS}. One reason to investigate the ``minimal extension property'' of Hodge metrics is that we hope one can apply it to other related situations. On the other hand, it is a general philosophy that the theory of Hodge modules is closely related to $L^2$ methods. For one thing, by Saito's inductive construction, cohomology of Hodge modules on arbitrary manifolds can be reduced to Hodge modules on curves which are studied by Zucker \cite{Zucker} using $L^2$ cohomology. On the other hand, there are lots of work around close relations between $D$-modules and multiplier ideal sheaves (see \cite{BS2003,MM16} for example).

Let us briefly sketch the proof. We will use the formulation after Theorem \ref{thm:main}. Since it is a local statement and any VHS away from codimension 2 automatically extends, we can assume $X$ is a unit ball in $\mathbb{C}^n$ and $Z$ is a divisor on $X$. To prove Theorem \ref{thm:main}, first we use the direct image theorem of Hodge modules \cite{Saito88} to reduce to the case where $Z$ is simple normal crossing. After a finite base change and some calculations, we further reduce to the case with unipotent monodromy. To conclude the proof, we use results by Schmid \cite{Schmid} on asymptotics of Hodge metrics to analyze the coefficient functions of $L^2$ sections.

%Corollary \ref{cor:minimal extension property} can be argued as follows: because $E$ is the lowest piece in the Hodge filtration of a VHS, Schmid's curvature calculation \cite{Schmid} implies that $E$ is Nakano semi-positive. Starting with any vector in a fiber of $E$, by Ohsawa-Takegoshi with sharp estimates \cite{Blocki, GZ}, we can extend it to a holomorphic section of $E$ with optimal $L^2$ bounds. The key step is to show that $L^2$ sections of $E$ can be identified with sections of $F_p\cM$. 
%As a corollary of Theorem~\ref{thm:main}, we prove an interesting fact about coherence of sheaves of sections which are locally $L^2$ near a divisor. Recall that $\cV$ is a VHS on $X\setminus D$ and $E$ is the lowest nonzero piece in the Hodge filtration of $\cV$. Let $j:X\setminus D \hookrightarrow X$ be the open embedding. 

%\begingroup
%\setcounter{tmp}{\value{thm}}
%\setcounter{thm}{2} 
%\renewcommand\thethm{\Alph{thm}}
%\begin{corollary}\label{cor:L2sections}
%Let $\mathcal{F}$ be the subsheaf of $j_{\ast}E$ consisting of sections of $E$
%which are locally $L^2$ near $D$ with respect to the Hodge metric on $E$ and the standard Lebesgue measure. Then $\mathcal{F}$ is coherent. 
%\end{corollary}
%\endgroup

In $\S \ref{sec:prelim}$ we will review some background. In $\S \ref{section:proof}$ we give the proof of Theorem~\ref{thm:main} and Corollary~\ref{cor:minimal extension property}.

\begin{convention*}
We will use left $D$-modules throughout the paper, as they are more natural from the metric point of view.
\end{convention*}

\begin{acknowledgements*}
We would like to thank Nathan Chen, Robert Lazarsfeld and Lei Wu for reading a draft of the paper. We would like to thank Junchao Shentu for pointing out an error in the previous version of the paper.
\end{acknowledgements*}

\section{Preliminaries}\label{sec:prelim}

In this section, we will set up notation and recall some background.

\subsection{Variation of Hodge Structures}\label{sec:definition of VHS}
Let $X$ be a complex manifold. A \textit{polarized variation of rational Hodge structures} (VHS) on $X$ consists of the following data:

\begin{enumerate}

\item A local system $V_{\mathbb{Q}}$ of finite dimensional $\mathbb{Q}$-vector spaces; 
\item A holomorphic vector bundle $\cV$ with a flat connection $\nabla: \cV \to \Omega^1_X \otimes \cV$;

\item A finite decreasing filtration $F^{\bullet}\cV$ by holomorphic subbundles satisfying the Griffiths transversality
\[ \nabla F^{p}\cV \subset \Omega^1_X \otimes F^{p-1}\cV;\]

\item A flat non-degenerate bilinear form $S: V_{\mathbb{Q}} \otimes_{\mathbb{Q}} V_{\mathbb{Q}} \to \mathbb{Q}.$ 

\end{enumerate}
They are related in the following way:
\begin{itemize}

\item The local system of $\nabla$-flat holomorphic sections of $\cV$ is isomorphic to $V_{\mathbb{Q}}\otimes_{\mathbb{Q}} \mathbb{C}$. In particular, 
\[ \cV \cong V_{\mathbb{Q}}\otimes_{\mathbb{Q}} \mathcal{O}_X\]
and $S$ extends to $\cV$ in a $C^{\infty}$ way.
\item $\cV$ admits the Hodge decomposition $ \cV=F^p \oplus \overline{F}^{k-p+1}$,
where $\overline{F}$ is the conjugate of $F$ relative to $V_{\mathbb{R}}=V_{\mathbb{Q}}\otimes_{\mathbb{Q}}\mathbb{R}$ and $k$ is the weight of the variation.
 
\item $S^h(\cdot,\cdot) \colonequals i^{-k}S(\cdot, \bar{\cdot})$ is a hermitian form such that the Hodge decomposition is $S^h$-orthogonal and $(-1)^pS^h$ is positive definite on $\cV^{p,k-p}$. In particular, such a polarization determines a smooth hermitian metric $h$ on $\cV$:
\begin{eqnarray}\label{eqn:filtration metric}
 h\colonequals \sum_p (-1)^pS^h|_{\cV^{p,k-p}}.
\end{eqnarray}
We will call $h$ the \textit{Hodge metric} on $\cV$. For $v\in \cV_x$, $\abs{v}_{h,x}$ means the length of this vector.
\item For every $x\in X$, $(\cV_x,V_{\mathbb{Q},x}, F^{\bullet}\cV_x)$ defines a rational Hodge structure of weight $k$ which is polarized by the bilinear form $S_x$.
\end{itemize}

\begin{remark}
In the rest of the paper, when we say $\cV$ is a VHS, we actually mean a polarized variation of rational Hodge structures with data $(V_{\mathbb{Q}},\cV,\nabla,F^{\bullet}\cV,S)$. 
\end{remark}

\subsection{Deligne's canonical lattice}\label{section:can}
In this section, we will assume $X=\D^{n}$ and $U=(\D^{\ast})^n$. Let $s_1,\ldots,s_n$ be the coordinates on $X$. Then $D\colonequals X\setminus U$ is a simple normal crossing divisor so that $D=\cup D_j$, where $D_j=\{ s_j = 0\}$ for $1\leq j \leq n$. 

Let $\cV$ be a holomorphic vector bundle over $U$ with a flat connection $\nabla: \cV \to \Omega^1_U \otimes \cV $. The fundamental group of $U$ is canonically isomorphic to $\mathbb{Z}^n$, which acts on the fiber of $\cV$ by parallel transport. Let $T_j$ be the operator corresponding to the $j$-th standard generator of  $\mathbb{Z}^n$. We say $(\cV,\nabla)$ has \textit{quasi-unipotent monodromy} if each $T_j$ is quasi-unipotent. An interval $I\subseteq \mathbb{R}$ is said to be of \textit{length one} if the natural quotient map $\mathbb{R} \to \mathbb{R}/\mathbb{Z}$ restricts to an isomorphism of sets on $I$. Deligne\cite{Deligne} proved the following fact:

\begin{thm} Let $\cV$ be a holomorphic vector bundle over $U$ with a flat connection $\nabla$ so that $(\cV,\nabla)$ has quasi-unipotent monodromy. For any interval $I$ of length one, there exists a holomorphic bundle $\cV_{I}$ on $X$ (which is unique up to a unique isomorphism) extending $\cV$ such that $\nabla$ extends to
\[ \nabla: \cV_{I} \to \Omega^1_{X}(\log D) \otimes \cV_{I},\]
with only logarithmic poles and the eigenvalues of the residue operator
\[ \Res_{D_j}\nabla \colonequals s_j\nabla_{\frac{\partial}{\partial s_j}}: \cV_{I}|_o \to \cV_{I}|_o \]
are contained in $I$ for each j, where $o$ is the origin of $X$.
\end{thm}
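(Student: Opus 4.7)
My plan is to construct $\V_I$ explicitly by writing down a global holomorphic frame. Fix a base point $p \in U$ and let $V = \V_p$. Parallel transport gives a multivalued flat frame $\tilde{e} = (\tilde{e}_1,\dots,\tilde{e}_r)$ of $\V$ on $U$, and the monodromy operators $T_1,\dots,T_n$ act on $V$, commute pairwise, and are quasi-unipotent by assumption.

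The central step is to produce commuting endomorphisms $\Gamma_1,\dots,\Gamma_n$ of $V$ such that $T_j = \exp(-2\pi i\, \Gamma_j)$ and every eigenvalue of each $\Gamma_j$ lies in $I$. On each generalized eigenspace of $T_j$ for a root of unity $\zeta$, write $T_j = \zeta(\mathrm{id}+N_j)$ with $N_j$ nilpotent, and set
\[
  \Gamma_j := -\tfrac{1}{2\pi i}\bigl(\log\zeta + \log(\mathrm{id}+N_j)\bigr),
\]
choosing the branch of $\log\zeta$ so that $-\tfrac{1}{2\pi i}\log\zeta \in I$; this is possible and unique because $I$ has length one, and the series $\log(\mathrm{id}+N_j)$ terminates. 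The operators $\Gamma_j$ then commute, because each is a polynomial in $T_j$ on each generalized eigenspace, and the $T_j$ commute.

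Having chosen the $\Gamma_j$, define
\[
  e := \tilde{e} \cdot \prod_{j=1}^n s_j^{\Gamma_j}, \qquad s_j^{\Gamma_j} := \exp(\Gamma_j \log s_j),
\]
which is unambiguous because the $\Gamma_j$ commute. Analytic continuation once around $D_j$ sends $\tilde{e}\mapsto \tilde{e}\, T_j$ and $s_j^{\Gamma_j}\mapsto \exp(2\pi i\,\Gamma_j)\, s_j^{\Gamma_j} = T_j^{-1} s_j^{\Gamma_j}$, while the other factors $s_k^{\Gamma_k}$ are unchanged and commute with $T_j$; hence $e$ is single-valued on $U$ and provides a holomorphic frame of $\V$ there. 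I then define $\V_I$ to be the free $\mathcal{O}_X$-module with basis $e$. Independence of the auxiliary choice of $\tilde{e}$ follows because replacing $\tilde{e}$ by $\tilde{e}\, A$ conjugates each $T_j$, hence each $\Gamma_j$, by $A$, so $e$ transforms simply as $e \mapsto e A$.

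Finally, using $\nabla\tilde{e} = 0$ one computes
\[
  \nabla e = \tilde{e}\cdot d\!\Bigl(\prod_j s_j^{\Gamma_j}\Bigr) = \sum_{j=1}^n e\cdot \Gamma_j\,\frac{ds_j}{s_j},
\]
which shows that $\nabla$ extends to a map $\V_I \to \Omega^1_X(\log D)\otimes \V_I$ with $\Res_{D_j}\nabla = \Gamma_j$, whose eigenvalues lie in $I$ by construction. The only step requiring genuine care is arranging that the logarithms $\Gamma_j$ can be chosen simultaneously, with prescribed eigenvalue locations and with mutual commutativity; once that is in place, single-valuedness of $e$ and the residue computation are essentially formal.
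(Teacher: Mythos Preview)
Your proof is correct. The paper does not give a full proof of this theorem (it is attributed to Deligne), but immediately afterward constructs the special case $\V^{>-1}$, i.e.\ $I=(-1,0]$, by exactly your method: it passes to the universal cover $\mathbb{H}^n$, sets $R_j=\log T_j$ via the Jordan decomposition $T_j=(T_j)_s(T_j)_u$, and generates the extension by the sections $z\mapsto e^{\sum z_jR_j}v$; your frame $e=\tilde{e}\cdot\prod_j s_j^{\Gamma_j}$ is this same construction written on the base rather than on the cover, with your $\Gamma_j$ corresponding to the paper's $R_j/(2\pi\sqrt{-1})$, carried out for general $I$.
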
 

\begin{definition}\label{definition:canonical lattice} We define \textit{Deligne's canonical lattice} to be the associated holomorphic vector bundle $\cV_{I}$ for any interval $I$ of length one. For any $\beta \in \mathbb{R}$,  we write $\cV^{\beta} \colonequals \cV_{[\beta,\beta+1)}$ and $\cV^{>\beta} \colonequals \cV_{(\beta,\beta+1]}$. 
\end{definition}

For our purposes, the following explicit construction of $\cV^{>-1}$ will be useful. First, we need to define the logarithm of a quasi-unipotent operator.
\begin{construction}\label{def:log} 
Let $V$ be a $\mathbb{C}$-vector space and let $T$ be a quasi-unipotent operator on $V$. Consider the Jordan decomposition 
\[ T=T_s \cdot T_u\]
where $T_s$ is semisimple and $T_u$ is unipotent. We define the \textit{quasi-unipotency index} $m$ to be the smallest positive integer such that $T_s^m=\mathrm{Id}$.

\begin{enumerate}

\item Since $T_s$ is semisimple over $\mathbb{C}$, there is a decomposition
\[ V=\oplus_{\alpha}V_{\alpha}  \quad \mathrm{with} \quad T_{s}|_{V_{\alpha}}=\lambda_\alpha \cdot \mathrm{Id}|_{V_{\alpha}},\] 
where $\lambda_\alpha$ are the eigenvalues of $T_s$. Because $\lambda_{\alpha}^m=1$, we can choose integers $k_\alpha$ satisfying $0 \leq k_\alpha \leq m-1$ such that 
\[ \lambda_\alpha = e^{-2\pi \sqrt{-1}k_\alpha/m}.\]
We define $\log T_s$ by its action on eigenspaces:
\[ \log T_s|_{V_{\alpha}} \colonequals -(2\pi \sqrt{-1}k_\alpha/m) \cdot \mathrm{Id}|_{V_{\alpha}}.\]
\item Since $T_u$ is unipotent, we can define its logarithm as a convergent series
\[ \log T_u= - \sum_{k\geq 1}^{\infty} \frac{1}{k}(\mathrm{Id}-T_u)^k.\]
\end{enumerate}
Then we define the logarithm of $T$ to be  
\[ \log T \colonequals \log T_s + \log T_u,\]
where $\log T_s$ is semisimple and $\log T_u$ is nilpotent.
\end{construction}

Now we begin the construction of $\cV^{>-1}$. Let us consider the universal covering map of $U=(\D^{\ast})^n$:
\[ p: \mathbb{H}^n \to (\D^{\ast})^n, \quad (z_1,\ldots,z_n) \mapsto (e^{2\pi \sqrt{-1}z_1},\ldots, e^{2\pi \sqrt{-1}z_n}).\]
The fundamental group $\mathbb{Z}^n$ acts on $\mathbb{H}^n$ by the rule
\[ (a_1,\ldots,a_n)\cdot (z_1, \ldots, z_n) = (z_1+a_1,\ldots, z_n+a_n).\]
Let $V \colonequals H^0(\mathbb{H}^n,p^\ast\cV)^{p^\ast \nabla}$ to be the space of $p^\ast \nabla$-flat sections of $p^\ast\cV$ on $\mathbb{H}^n$, which trivializes $p^{\ast}\cV$:
\[ p^{\ast}\cV \cong V\otimes \mathcal{O}_{\mathbb{H}^n}.\]
Therefore the sections of $\cV$ over $(\D^{\ast})^n$ correspond to holomorphic maps $\sigma: \mathbb{H}^n \to V$ with the property that $\sigma(z+e_j)=T_j \cdot \sigma(z)$ for all $z\in \mathbb{H}^n$ and all $j=1,\ldots,n$.

We set $R_j \colonequals \log T_j$ by Construction \ref{def:log}. For each $v \in V$, the holomorphic map
\[ \sigma: \mathbb{H}^n \to V, \sigma(z)=e^{\sum z_jR_j}v, \]
has the required property because $\sigma(z+e_j)=e^{R_j}\cdot \sigma(z)=T_j\cdot \sigma(z)$. Then $\cV^{>-1}$ is defined to be the vector bundle over $X=\D^n$ generated by all sections of this form, i.e.
\[ \cV^{>-1} \cong (e^{\sum z_jR_j}V)\otimes_{\mathbb{C}} \mathcal{O}_X.\]
Note that by the construction,
\[ \Res_{D_j}\nabla = \frac{R_j}{2\pi\sqrt{-1}}.\]
Since the eigenvalues of $R_j$ are of the form $-(2\pi \sqrt{-1}k/m)$, where $0\leq k\leq m-1$, the eigenvalues of $\Res_{D_j}\nabla$ all lies in $(-1,0]$. 
Therefore $\cV^{>-1}$ satisfies the conditions in Deligne's theorem.

\begin{remark}\label{remark:unipotent}
If all $T_j$'s are unipotent, we say that $(\cV,\nabla)$ has \textit{unipotent monodromy}. In this case, the eigenvalues of $\Res_{D_j}\nabla$ are all zero. Therefore
\[  \cV^{>-1} = \cV^{0}.\]
\end{remark}

\subsection{Asymptotics of Hodge metrics}\label{section:SL2orbits}
In this section, we will review the results on asymptotics of Hodge metrics.

Let $\cV$ be a $\mathbb{Q}$-VHS over $\D^{\ast}\times \D^{n-1}$ with unipotent monodromy operator $T_1$. Let $N_1\colonequals \log T_1$. Denote the universal covering map to be
\[p: \mathbb{H}\times \D^{n-1} \to \D^{\ast} \times \D^{n-1}.\]
Let $V\colonequals H^0(\mathbb{H}\times \D^{n-1}, p^{\ast}\cV)^{p^{\ast}\nabla}$, where $\nabla$ is the connection associated to $\cV$ and let $W$ be the limit weight filtration on $V$. Let 
\[ \Phi: \mathbb{H}\times \D^{n-1} \to \mathbf{D}\]
be the corresponding period map, where $\mathbf{D}$ is the classifying space of polarized Hodge structures associated to $\cV$. Each point $F\in \mathbf{D}$ determines a hermitian inner product $H_F$ on $V$ as in (\ref{eqn:filtration metric}). Therefore one can represent the pullback of the Hodge metric $p^\ast h$ on $\cV$ by $H_{\Phi(z)}$, where $z$ is the coordinates on $\mathbb{H}\times \Delta^{n-1}$. Schmid \cite[Theorem 4.9]{Schmid} proved that

\begin{thm}[Schmid]\label{thm:one dimension hodge metric}

There is a hermitian inner product $Q$ on $V$ such that over any region of the form 
\[ \mathbb{H}_\epsilon \times \D^{n-1} \]
where $\mathbb{H}_\epsilon \colonequals \{ z_1 \in \mathbb{H} : \im z_1 >  \epsilon \}$, $\abs{v}_{H_{\Phi(z)}}^2$ is mutually bounded with
\[ \sum_{l \in \mathbb{Z}} \left(-\log \abs{s_1}\right)^{l}\abs{v}^2_{Q_l}.\]
where $v\in V$, $s_1=e^{2\pi \sqrt{-1} z_1}$.  $Q=\oplus Q_l$ corresponds to the grading induced by the non-canonical isomorphism $V\cong  \oplus_{l\in \mathbb{Z}} \mathrm{Gr}^{W}_{l}V$.
\end{thm}

\begin{remark}
Here we present the Theorem as in \cite[Theorem 5.1]{CKSurvey}. In \cite[Theorem 4.9]{Schmid}, Schmid only proved the case $n=1$. But it is known that all the constants in Schmid's arguments are uniform in any family of VHS over the punctured disk with compact parameter space. The reader can refer to \cite[Lemma 8.19]{Schmid} for the nilpotent orbit theorem. See also \cite[Page 465, Page 482]{CKS}.
\end{remark}
%\begin{remark}
%In both cases, the comparison also holds for norms of arbitrary sections of $p^{\ast}\cV$ because $Q$ gives rise to a hermitian metric on $p^{\ast}\cV$ via the trivialization given by $V$.
%\end{remark}

%Since we will apply Theorem \ref{thm:one dimension hodge metric} to sections coming from the smallest piece of a VHS, we have the following
\begin{lemma}\label{lemma:refined version of Schmids Theorem}
With the notations in Theorem \ref{thm:one dimension hodge metric}. Let $F^q\cV^{>-1}$ be the smallest nonzero piece in the Hodge filtration of the canonical extension $\cV^{>-1}$ . Then $F^q\cV^{>-1}$ has a trivialization by holomorphic sections $\sigma_{\alpha}$ such that if one takes any holomorphic section $\sigma$ of $F^q\cV^{>-1}$ over $\Delta^{\ast}\times \Delta^{n-1}$ and write it as 
\[ \sigma=\sum f_{\alpha} \sigma_{\alpha}\]
with $f_{\alpha}$ being holomorphic sections, the pointwise Hodge norm $|\sigma|^2_{h}$ is mutually bounded with
\[ \sum_{\alpha} |f_{\alpha}|^2(-\log |s_1|)^{n_{\alpha}}\]
where all $n_{\alpha}$ are non-negative integers.
\end{lemma}

\begin{proof} We first would like to describe a way to represent the period map $\Phi$  in terms of the limit mixed Hodge structure $W$ (see \cite[\S 3.2]{Schnell12} and \cite[Page 71]{CKSurvey}). Recall that $z_1$ and $s_1$ are coordinates on $\mathbb{H}$ and $\Delta^{\ast}$, and $z$ is the coordinates on $\mathbb{H}\times \Delta^{n-1}$. By the Nilpotent Orbit Theorem \cite[Theorem 4.12]{Schmid}, we have
\[ e^{-z_1N_1}\Phi(z)=\Psi(s), \]
with $\Psi:\Delta^n \to \check{D}$ holomorphic. Then $(W,\Psi(0))$ is a $\mathbb{Q}$-mixed Hodge structure and we denote
\[ F_{\mathrm{lim}}\colonequals \Psi(0) \in \check{D}.\]
The subspaces $I^{p,q}$ associated to the mixed Hodge structure $(W,F_{\mathrm{lim}})$, defined by
\[I^{p,q}=F_{\mathrm{lim}}^p\cap W_{p+q}\cap (\overline{F}^q_{\mathrm{lim}}\cap W^{p+q}+\sum_{j\geq 1} \overline{F}^{q-j}_{\mathrm{lim}}\cap W_{p+q-j-1})\]
give a bigrading of $(W,F_{\mathrm{lim}})$ in the sense that
\begin{equation}\label{eqn:decomposition of W and F}
W_{\ell} = \bigoplus_{a+b\leq \ell}I^{a,b}, \quad F^p_{\mathrm{lim}}=\bigoplus_{a\geq p}I^{a,b}.
\end{equation}
This induces a decomposition on the complexified Lie algebra $\mathfrak{g}$ of $\mathrm{Aut}(V_{\mathbb{R}},S)$, where $S$ is the bilinear pairing on $V_{\mathbb{R}}$ (see \S \ref{sec:definition of VHS}),
\[ \mathfrak{g}=\bigoplus_{p,q} \mathfrak{g}^{p,q},\]
with $\mathfrak{g}^{p,q}$ consisting of those $X$ that satisfy $XI^{a,b}\subseteq I^{a+p,b+q}$. Define
\[ \mathfrak{g}^{F_{\mathrm{lim}}}
\colonequals \bigoplus_{p\geq0} \mathfrak{g}^{p,q}, \quad \mathfrak{q} \colonequals  \bigoplus_{p< 0} \mathfrak{g}^{p,q}.\]
Then we have $\mathfrak{g}=\mathfrak{g}^{F_{\mathrm{lim}}}\oplus \mathfrak{q}$ and $\mathfrak{q}$ is a nilpotent Lie subalgebra of $\mathfrak{g}$. Then
we can write $\Psi(s)=e^{\Gamma(s)}F_{\mathrm{lim}}$ for a unique holomorphic map $\Gamma:\Delta^n \to \mathfrak{q}$ with $\Gamma(0)=0$. Therefore we can put the period map into the following standard form
\begin{equation}\label{eqn: normal form of period map}
\Phi(z)=e^{z_1N_1}e^{\Gamma(s)}F_{\mathrm{lim}}=e^{X(z)}F_{\mathrm{lim}},
\end{equation}
with $X(z)\in \mathfrak{q}$ being nilpotent.

Let us recall a standard fact about mixed Hodge structures.
\begin{fact}\label{claim: lowest Hodge intersect with weight}
Let $(V,F^{\bullet}V,W_{\bullet}V)$ be a mixed Hodge structure of weight $k$ where 
\[ W_{\bullet}V: \{0\}=W_{0}V\subseteq W_{1}V\cdots \subseteq W_{2k}V=V \]
 is induced by a nilpotent operator $N:V\to V$ with $N^{\ell}:\gr^W_{k+\ell}V\xrightarrow{\sim} \gr^W_{k-\ell}V$ and 
\[ N:F^{\bullet}V\to F^{\bullet-1}V\]
is strict with respect to the Hodge filtration.
 Let $F^qV$ be the smallest nonzero piece in the Hodge filtration, then
\[ F^qV \cap W_{k-1}V=\{0\}.\]
\end{fact}

\begin{proof}
An immediate consequence of the isomorphism $N^i: \gr^W_{k+i}V \xrightarrow{\sim} \gr^W_{k-i}V$ is 
\[ W_{k-1}V \subseteq \mathrm{Im}(N).\]
Using the assumption that $N$ is strict with respect to the Hodge filtration $F^{\bullet}$, we conclude that
\[ F^qV \cap W_{k-1}V \subseteq F^qV \cap \mathrm{Im}(N)=\mathrm{Im}(N|_{F^{q+1}V})=\{0\}. \]
\end{proof}
Suppose the VHS $\cV$ has weight $k$, then the mixed Hodge structure $(V,W,F_{\mathrm{lim}})$ has weight $k$ (as in \cite{CKS}) and satisfies the assumption of Fact \ref{claim: lowest Hodge intersect with weight}. Let $F^q_{\mathrm{lim}}$ be the smallest non-zero piece, then by Fact \ref{claim: lowest Hodge intersect with weight} we have
\[ F^q_{\mathrm{lim}}V \cap W_{k-1}V=\{0\}.\]
In particular, the decomposition (\ref{eqn:decomposition of W and F}) becomes
\begin{equation}\label{eqn:refined decomposition}
F^q_{\mathrm{lim}}=\bigoplus_{a\geq q, a+b\geq k} I^{a,b}.
\end{equation}

Now we choose a $Q$-orthogonal basis of each $I^{a,b}$, which gives a grading $V\cong \oplus_{\ell \in \mathbb{Z}} \mathrm{Gr}^W_{\ell}V$ by (\ref{eqn:decomposition of W and F}). Suppose $v\in F^q_{\mathrm{lim}}$, using the standard form of the period map (\ref{eqn: normal form of period map}) we see that
\[ \sigma\colonequals e^{X(z)}v\]
gives a section of the smallest piece  $F^q\cV^{>-1}$. Therefore a basis of $F^q_{\mathrm{lim}}V$ gives rise to a frame of $F^q\cV^{>-1}$ and it suffices to prove the norm estimates for these frame sections. We will use $A\sim B$ to mean that $A$ and $B$ are mutually bounded up to multiplication by a constant. By Schmid's Theorem \ref{thm:one dimension hodge metric}, we have
\[ |e^{X(z)}v|^2_{\Phi(z)} \sim |e^{X(z)}v|^2_{Q}\]
If $v\in I^{a,b}$, then one can see that the dominant term of $|e^{X(z)}v|^2_{Q}$ is
\[ (-\log |s_1|)^{a+b-k}Q(v,N_1^{a+b-k}v)=(-\log |s_1|)^{a+b-k}|v|^2_{Q_{a+b-k}}\]
Using $v\in F^q_{\mathrm{lim}}$ and (\ref{eqn:refined decomposition}), we have
\[ |e^{X(z)}v|^2_{Q} \sim  \sum_{\ell \geq 0} (-\log |s_1|)^{\ell} |v|^2_{Q_\ell}\]
Now suppose $\sigma$ is an arbitrary section of $F^q\cV^{>-1}$, and we write as $\sigma=\sum f_{\alpha}\sigma_{\alpha}$, where $\{\sigma_{\alpha}\}$ is determined by a basis of $F^q_{\mathrm{lim}}V$ as above. When one expands $|\sum f_{\alpha}\sigma_{\alpha}|^2_{h}$, using the estimates above, we see that the leading term is
\[ \sum_{\alpha} |f_{\alpha}|^2(-\log |s_1|)^{n_{\alpha}}, \quad n_{\alpha}\geq 0.\]

\end{proof}

\subsection{Hodge modules}\label{section:HM}
Let $X$ be a complex manifold of dimension $n$. Let $\mathrm{HM}^p(X,w)$ be the category of polarized Hodge modules of weight $w$ on $X$ with strict support $X$. One of the main results in Saito's theory of Hodge modules \cite[Theorem 3.21]{Saito90a} is 
\begin{thm}[Structure Theorem]\label{thm:structure}
 Let X be a complex manifold of dimension $n$.
\begin{enumerate}
\item If $\cV$ is a polarized variation of rational Hodge structures of weight $(w - n)$ on a dense Zariski open subset of $X$, then $\cV$ extends uniquely to an object of $\mathrm{HM}^p(X,w)$.
\item Conversely, every object $\cM$ of $\mathrm{HM}^p(X,w)$ is obtained this way.
\end{enumerate}
\end{thm}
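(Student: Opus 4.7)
The plan is to proceed by induction on $n = \dim X$, with the zero-dimensional case reducing to the equivalence of categories between polarizable rational Hodge structures of weight $w$ and polarizable Hodge modules of weight $w$ on a point, which is essentially built into Saito's axiomatic definition. The inductive step relies on the two functors that define $\mathrm{HM}^p(X,w)$ in the first place: the nearby and vanishing cycles $\psi_{f}$ and $\phi_{f}$ along a locally defined holomorphic function $f$, which Saito requires to land in Hodge modules of dimension one less.

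For part (2), starting with $\M \in \mathrm{HM}^p(X,w)$ of strict support $Z$, I would first restrict to the smooth locus $Z^{\mathrm{sm}}$ and remove a further divisor to obtain a Zariski open $U \subset Z^{\mathrm{sm}}$ on which the characteristic variety of $\M$ is the zero section of $T^*U$, so that the underlying $D_U$-module is $\mathcal{O}_U$-coherent. On such $U$, the filtration $F_\bullet \M$ consists of $\mathcal{O}_U$-coherent subsheaves and Griffiths transversality is automatic from the compatibility $F_p D_X \cdot F_q \M \subset F_{p+q}\M$. To upgrade this flat bundle with filtration to a polarized VHS, I would apply the Hodge module axioms at each point $x \in U$: choosing generic local coordinates around $x$ and taking iterated nearby cycles, the axiom that these remain polarizable Hodge modules (of lower dimension, so the inductive hypothesis applies) forces the fiber $\M_x$ together with its filtration and polarization to be a polarized Hodge structure of weight $w - \dim Z$.

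For part (1), given $\V$ on an open subset whose complement is a divisor $D$, I would first use resolution of singularities to find $\pi \colon \widetilde{X} \to X$ with $\widetilde{D} = \pi^{-1}D$ a simple normal crossing divisor, so that $\V$ lifts to a VHS on $\widetilde{X} \setminus \widetilde{D}$. There I would construct the extension using Deligne's canonical lattice $\V^{>-1}$ from Section~\ref{section:can}, equipped with its Hodge filtration defined by the relative monodromy weight construction on each iterated nearby cycle. The Hodge-theoretic heart of the argument is Schmid's Nilpotent Orbit and $SL_2$-Orbit Theorems (together with the multivariable refinement of Cattani-Kaplan-Schmid recalled in Theorem~\ref{thm:higher dimension hodge metric}), which provide the limit mixed Hodge structure on $\psi_{s_j}\V$ polarized by the nilpotent logarithm $N_j$; this supplies exactly the data needed to verify Saito's axioms inductively on $\widetilde{X}$. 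Finally, I would push forward via Saito's Direct Image Theorem and take the strict-support-$Z$ summand of the resulting pure Hodge module on $X$; uniqueness follows from the fact that two extensions with no sub- or quotient-object supported on $D$ must agree on $U$ and hence everywhere, the defining property of the intermediate extension.

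The main obstacle is the coherence of the whole inductive scheme: one must simultaneously verify that the candidate $(\M, F_\bullet \M)$ on $\widetilde{X}$ satisfies \emph{all} of Saito's axioms (strictness of the $V$-filtration with respect to $F_\bullet$, Hodge-theoretic compatibility of $\psi$ and $\phi$, existence of a polarization descending through these functors) and that these properties are preserved under proper direct image. This is where the asymptotic analysis of the Hodge metric meets the $D$-module formalism, and checking compatibility at the intersection of several boundary components of $\widetilde{D}$ — which requires the several-variable $SL_2$-orbit theorem — is the genuinely deep step.
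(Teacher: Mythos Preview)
The paper does not prove this theorem. Theorem~\ref{thm:structure} is stated in the Preliminaries section as background, with the attribution ``One of the main results in Saito's theory of Hodge modules \cite[Theorem~3.21]{Saito90a}'', and no proof is given or attempted; the paper simply uses it as a black box to associate to a VHS on $X\setminus D$ a Hodge module $\M$ on $X$. So there is nothing in the paper to compare your proposal against.

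As a sketch of Saito's actual argument your outline is broadly in the right spirit --- induction on dimension via nearby/vanishing cycles, the nilpotent and $SL_2$-orbit theorems to control the limit behaviour, resolution plus direct image to handle non-normal-crossing boundary, and uniqueness via the intermediate extension property --- but it is only a road map, not a proof. In particular, the clause ``the axiom that these remain polarizable Hodge modules \ldots\ forces the fiber $\M_x$ \ldots\ to be a polarized Hodge structure'' hides the genuinely difficult part of (2), and in (1) the verification that the candidate filtered $D$-module on $\widetilde X$ satisfies all of Saito's conditions (regularity and quasi-unipotence along every locally defined $f$, strict compatibility of $F_\bullet$ with the $V$-filtration, and the polarization descending correctly through $\psi$, $\phi$) is a substantial body of work, not something that follows formally from the one-variable orbit theorems. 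Since the paper treats the Structure Theorem as input rather than as something to be proved, the appropriate thing here is simply to cite Saito, as the authors do.
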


Assume $X=\D^n$ and $U$ is a Zariski open subset of $X$ such that $D \colonequals X\setminus U$ is a divisor (If $X\setminus U$ is of higher codimension, the VHS on $U$ automatically extends to a VHS on $X$). Let $j: U \hookrightarrow X$ be the open embedding. Let $\cV$ be a VHS over $U$ and let $\cM$ be the polarized Hodge module on $X$ with strict support $X$ corresponding to $\cV$. We would like to recall the construction of the Hodge filtration $F_\bullet \cM$ on $\cM$, which consists of two steps.

\textbf{Step 1.} Assume $D$ is a simple normal crossing divisor, i.e. $D=\cup D_j$ and we can choose coordinates $s_1,\ldots,s_n$ on $X$ so that $D_j\colonequals \{ s_j = 0\}$. Then $U=(\D^{\ast})^n$. Denote $\cV^{> -1}$ to be the Deligne's canonical lattice associated to $\cV$ as in Definition \ref{definition:canonical lattice}. Schmid's nilpotent orbit theorem \cite{Schmid}  guarantees that the Hodge filtration on $\cV$ extends to a Hodge filtration on Deligne's canonical lattice\footnote{For VHS with unipotent monodromy, this is proved by \cite[Theorem 4.13]{Schmid}, as a consequence of \cite[Theorem 4.12]{Schmid}. Saito explains in \cite[\S 3.10]{Saito90a} how to deduce the statement for VHS with quasi-unipotent monodromy from Schmid's results.}, i.e.
\[ F^k(\cV^{>-1}) \colonequals  j_{\ast}(F^k\cV) \cap \cV^{>-1}\]
are holomorphic subbundles of $\cV^{>-1}$. We need to reindex this decreasing filtration to an increasing filtration for $D$-modules: 
\[ F_k(\cV^{>-1}) \colonequals F^{-k}(\cV^{>-1}).\]
It is proved in \cite[(3.18.2)]{Saito90a} that $\cM$ is generated by $\cV^{>-1}$ as a $D_X$-module:
\[ \cM = D_X \cdot \cV^{>-1}.\]
Let $i=(i_1,\ldots,i_n) \in \mathbb{Z}_{+}^{n}$ be a $n$-tuple of positive integers. We denote $\abs{i} \colonequals i_1+\cdots+i_n$ and $\partial^{i}_s \colonequals \partial^{i_1}_{s_1}\cdots \partial^{i_n}_{s_n}$. The increasing Hodge filtration $ F_{\bullet}\cM$ is defined as
\begin{eqnarray*}
F_k\cM &\colonequals& \sum_{i\in \mathbb{Z}_{+}^{n}} \partial^{i}_s(F_{k-\abs{i}}\cV^{>-1})\\
&=& \sum_{i\in \mathbb{Z}_{+}^{n}} \partial^{i}_s(F^{\abs{i}-k}\cV^{>-1}) \\
&=& \sum_{i\in \mathbb{Z}_{+}^{n}} \partial^{i}_s(j_{\ast}(F^{\abs{i}-k}\cV)\cap \cV^{>-1}).
\end{eqnarray*}
Let $q \colonequals \max \{ k : F^{k}\cV \neq 0 \}$ to be the lowest nonzero piece in the Hodge filtration of $\cV$ over $U$.  Then the lowest nonzero piece in the Hodge filtration of $\cM$ is
\begin{equation}\label{eqn:filtration}
F_{-q}\cM = j_{\ast}(F^{q}\cV) \cap \cV^{>-1}. 
\end{equation}

\begin{remark}In \cite{Saito90b}, Saito has the formula for right Hodge module:
\[ F_k\cM = \sum_{i\in \mathbb{Z}_{+}^{n}} (j_{\ast}(F^{\abs{i}-k-n}\cV) \cap \cV^{>-1})\partial^{i}_s.\]
Here we convert it to the filtration on the corresponding left $D$-module, which has a shift by $n=\dim X$.
\end{remark}

\textbf{Step 2.} Assume $D$ is an arbitrary divisor. Let $f: (\tilde{X},\tilde{D}) \to (X,D)$ be a log resolution such that $\tilde{D}$ is a simple normal crossing divisor and $\tilde{X}\setminus \tilde{D} \cong X\setminus D$. Denote $\tilde{U} \colonequals \tilde{X}\setminus \tilde{D}$. Consider $\tilde{\cV}=(f|_{\tilde{U}})^{\ast}\cV$ to be the VHS on $\tilde{U}$. Let $\tilde{\cM}$ be the Hodge module on $\tilde{X}$ associated to $\tilde{\cV}$. Let $F_p\tilde{\cM}$ and $F_p\cM$ be the lowest nonzero piece in the Hodge filtrations. 
%We have
%\begin{equation}\label{eqn: pull back of Hodge module}
%f^{\ast}(F_p\cM)=F_p\tilde{\cM}.
%\end{equation}
As a corollary of the direct image theorem, Saito \cite[Theorem 1.1]{Saito90b} proved that
\begin{eqnarray}\label{eqn:direct image}
f_{\ast}(F_p\tilde{\cM}\otimes \omega_{\tilde{X}}) = F_p\cM\otimes \omega_X.
\end{eqnarray}
The entire filtration $F_{\bullet}\cM$ is defined in a more complicated way. But for our purpose, it is enough to understand the lowest nonzero piece.

\subsection{Nakano positivity of Hodge bundles}\label{section:nakano}
Let $X$ be a complex manifold. Let $E$ be a holomorphic vector bundle on $X$ with a hermitian metric $h$, we say that $h$ is \textit{Nakano semi-positive} if the curvature tensor $\Theta_h$ is semi-positive definite as a hermitian form on $T_X\otimes E$, i.e. if for every $u\in \Gamma(T_X\otimes E)$, we have $\sqrt{-1}\Theta_h(u,u)\geq 0$ (see \cite{Demailly}).

\begin{ex}\label{ex:Schmid}
Let $\cV$ be a VHS on $X$ and let $F^q\cV$ be the lowest nonzero piece in the Hodge filtration. Recall that the Hodge metric $h$ on $F^q\cV$ is defined as follows: if $v,w \in H^0(X,F^q\cV)$, then
 \[ h(v,w) \colonequals (-1)^qS^h(v,\bar{w}).\]
By Schmid's curvature calculation \cite[Lemma 7.18]{Schmid}, $h$ is Nakano semi-positive.
\end{ex}
 
Nakano semi-positive bundles satisfy an optimal version of $L^2$ extension property proved by Blocki and Guan-Zhou \cite{Blocki,GZ}. Let $s_1,\ldots,s_n$ be the coordinates on $\mathbb{C}^n$, we write the standard Lebesgue measure to be $d\mu \colonequals c_n ds_1\wedge d\bar{s}_1\wedge \cdots \wedge ds_n\wedge d\bar{s}_n$ and $c_n=2^{-n}(-1)^{n^2/2}$.

\begin{thm}[Sharp Ohsawa-Takegoshi]\label{thm:Guan-Zhou}
Let $B\subset \mathbb{C}^n$ be the open unit ball. Let $Z$ be an analytic subset of $B\setminus \{0\}$. Let $(E,h)$ be a holomorphic vector bundle over $B\setminus Z$ such that $h$ is Nakano semi-positive. Then for every $v\in E_0$ with $\abs{v}_{h,0}=1$, there is a holomorphic section $\sigma \in H^0(B\setminus Z, E)$
with 
\[ \sigma(0)=v \text{ and } \frac{1}{\mu(B)}\int_B |\sigma|^2_h \ d\mu \leq 1.\]
\end{thm}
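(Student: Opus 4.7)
This statement is the Ohsawa-Takegoshi $L^2$-extension theorem with the sharp constant $1$, proved for line bundles by B{\l}ocki \cite{Blocki} and in greater generality by Guan-Zhou \cite{GZ}. My plan is to follow the Guan-Zhou degenerating-weights strategy, combined with the Nakano-curvature form of the Bochner-Kodaira-Nakano identity to handle the vector-bundle case.

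First I would reduce to a standard setup. Exhaust $B \setminus Z$ by an increasing family of relatively compact Stein open subsets $U_k$ each containing $0$. If I can produce, uniformly in $k$, sections $\sigma_k \in H^0(U_k, E)$ with $\sigma_k(0) = v$ and $\int_{U_k} |\sigma_k|_h^2 \, d\mu \leq \mu(U_k) \leq \mu(B)$, then a Montel/diagonal argument extracts a limiting $\sigma$ on $B \setminus Z$ with the same properties. On each $U_k$ I can smoothly trivialize $E$, extend $v$ to a smooth frame-section $\tilde v$, pick a compactly supported cutoff $\chi$ equal to $1$ near $0$, and look for $\sigma_k$ in the form $\chi \tilde v - u_k$ where $u_k$ is a $\bar\partial$-solution vanishing at $0$.

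The core step is to solve $\bar\partial u_k = \bar\partial(\chi \tilde v)$ with the sharp $L^2$ bound via Hörmander's theorem with respect to a carefully chosen one-parameter family of plurisubharmonic weights $\phi_t$. Guan-Zhou's ODE-controlled weights, of the form $\phi_t(z) = -\log(-\log|z_n|^2 + t)$ in the simplest codimension-one extension case (with an analogous construction built from a defining function of the extension locus in general), are plurisubharmonic and make $e^{-\phi_t} d\mu$ concentrate on $\{0\}$ as $t \to 0$. The twisted Bochner-Kodaira-Nakano inequality then gives
\[ \int_{U_k} |u_t|_h^2 \, e^{-\phi_t}\, d\mu \;\leq\; \int_{U_k} \bigl| \bar\partial(\chi \tilde v) \bigr|^2_{h,\omega} \, A_t^{-1}\, e^{-\phi_t}\, d\mu, \]
where $A_t$ combines the Nakano curvature of $(E,h)$ with the complex Hessian of $\phi_t$. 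Nakano semi-positivity forces the curvature contribution to be non-negative, so it suffices to control the twist alone; the Guan-Zhou calibration is arranged precisely so that as $t \to 0$ the right-hand side converges to exactly $|v|_{h,0}^2 \cdot \mu(U_k) = \mu(U_k)$, which is the sharp constant.

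The hardest and historically most delicate part is obtaining the constant $1$ rather than some fixed $C > 1$: generic plurisubharmonic weights always produce a loss, and one needs the precise matching between weight and Lebesgue measure afforded by B{\l}ocki's Legendre-transform recipe in order for the $\bar\partial$-solution to bleed away no mass in the limit $t \to 0$. For the vector-bundle statement here there is no extra difficulty beyond the scalar case: the Nakano hypothesis is exactly what is needed to run the scalar-weight Bochner-Kodaira argument on $E$-valued $(n,n)$-forms, and the analytic subset $Z$ causes no trouble because it has measure zero and we work inside $B \setminus Z$ throughout; passing to the limit along the exhaustion $\{U_k\}$ via Montel and weak $L^2$ compactness then produces the desired $\sigma$ on $B \setminus Z$ with $\sigma(0) = v$ and the sharp integral bound.
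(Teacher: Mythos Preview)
The paper does not prove this theorem at all: it is quoted as a black box from B{\l}ocki and Guan--Zhou, and the only argument supplied is the Remark immediately following the statement, which simply records how to specialize \cite[Theorem~2.2]{GZ} to the present situation by taking the pair $(B\setminus Z,\{0\})$, the parameters $A=0$, $c_A(t)\equiv 1$, and the weight $\Psi = n\log(|s_1|^2+\cdots+|s_n|^2)$. So your proposal is not really comparable to ``the paper's own proof'': you are sketching the internal mechanism of the Guan--Zhou argument, whereas the paper is content to invoke it.

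Your sketch is broadly faithful to the Guan--Zhou method (exhaustion, cutoff plus $\bar\partial$-correction, twisted Bochner--Kodaira--Nakano with a one-parameter family of weights calibrated by an ODE, Nakano semi-positivity ensuring the curvature term has the right sign), and the observation that the vector-bundle case introduces no new difficulty once Nakano semi-positivity is assumed is correct. One inaccuracy worth flagging: the explicit weight you write, $\phi_t(z)=-\log(-\log|z_n|^2+t)$, is the codimension-one (hyperplane) model, whereas here the extension is from the single point $0\in B$, so the relevant singular weight is built from $n\log(|s_1|^2+\cdots+|s_n|^2)$, exactly the $\Psi$ the paper records. You gesture at this with ``analogous construction built from a defining function of the extension locus in general,'' but if you were actually writing out the proof you would need to carry the point-extraction weight through the estimates, and the limiting constant computation (why the right-hand side converges to $|v|_{h,0}^2\cdot\mu(B)$ rather than $\mu(U_k)$) depends on that specific choice.
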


\begin{remark}
It is easy to see that $(B\setminus Z, \{0\})$ satisfies the condition (ab) in \cite[Definition 1.1]{GZ}. Then we can apply \cite[Theorem 2.2]{GZ} to the pair $(B\setminus Z, \{0\})$, taking $A=0, c_A(t)\equiv 1$ and $\Psi=n\log(\abs{s_1}^2+\cdots +\abs{s_n}^2)$. Guan-Zhou's definition of $d\mu$ doesn't involve the constant $c_n$, but it reduces to the version we write here because the two $c_n$'s in the inequality get cancelled.
\end{remark}

\subsection{Singular hermitian metrics and metric positivity}\label{section:metric positivity}
In this section, we recall the notion of singular hermitian metrics on torsion-free sheaves from \cite{HPS,PT}. Let $X$ be a complex manifold. Let $E$ be a holomorphic vector bundle on $X$ of some rank $r\geq 1$.

\begin{definition}
A \emph{singular hermitian metric} on $E$ is a function $h$ that associates to every point $x\in X$ a singular hermitian inner product $\abs{-}_{h,x}:E_x \to [0,+\infty]$ on the complex vector space $E_x$, subject to the following two conditions:
\begin{enumerate}

\item $h$ is finite and positive definite almost everywhere, meaning that for all $x$ outside a set of measure zero, $\abs{-}_{h,x}$ is a hermitian inner product on $E_x$.

\item $h$ is measurable, meaning that the function 
\[ \abs{s}_h: U \to [0,+\infty], \quad x\mapsto \abs{s(x)}_{h,x},\]
is measurable whenever $U\subseteq X$ is open and $s\in H^0(U,E)$.
\end{enumerate}
\end{definition}

\begin{definition}
Let $h$ be a singular hermitian metric on a holomorphic bundle $E$. We say that the pair $(E,h)$ has \emph{semi-positive curvature} if the function $\log \abs{f}_{h^{\ast}}$ is plurisubharmonic for every $f\in H^0(U,E^{\ast})$ and every open subset $U\subseteq X$, where $h^{\ast}$ is the induced singular hermitian metric on the dual bundle $E^{\ast}$.
\end{definition}
Let $\mathcal{F}$ be a torsion-free coherent sheaf on $X$. Let $X(\mathcal{F})$ denote the maximal open subset where $\mathcal{F}$ is locally free; then $X\setminus X(\mathcal{F})$ is a closed analytic subset of codimension $\geq 2$. In particular, the coherent sheaf $E\colonequals \mathcal{F}|_{X(\mathcal{F})}$ is locally free.

\begin{definition}
A \textit{singular hermitian metric} on $\mathcal{F}$ is a singular hermitian metric $h$ on the holomorphic vector bundle $E=\mathcal{F}|_{X(\mathcal{F})}$. We say that $h$ has \emph{semi-positive curvature} if the pair $(E,h)$ has semi-positive curvature.
\end{definition}
Inspired by the sharp $L^2$ extension Theorem \ref{thm:Guan-Zhou}, it is natural to consider the following ``minimal extension property" for singular hermitian metrics on torsion-free sheaves \cite{HPS}. Let $B\subset \mathbb{C}^n$ be the open unit ball.

\begin{definition}\label{MEP}
We say that a singular hermitian metric $h$ on $\mathcal{F}$ has the \textit{minimal extension property} if there exists a nowhere dense closed analytic subset $Z \subset X$ with the following two properties
\begin{enumerate}
\item $\mathcal{F}$ is locally free on $X\setminus Z$.

\item For every embedding $\iota: B \hookrightarrow X$ with $x=\iota(0) \in X\setminus Z$, and every $v\in E_x$ with $|v|_{h,x}=1$, there is a holomorphic section $\sigma \in H^0(B,\iota^{\ast}\mathcal{F})$ such that 
\[ \sigma(0)=v \text{ and } \frac{1}{\mu(B)}\int_B |\sigma|^2_h \ d\mu \leq 1.\]
\end{enumerate}

\end{definition}
For consequences and applications of minimal extension properties, readers may refer to \cite[\S 26]{HPS}. 
%\begin{ex}
%By Theorem \ref{thm:Guan-Zhou}, if $E$ is a holomorphic vector bundle with a Nakano semi-positive metric $h$, then $h$ has the minimal extension property. In particular, the Hodge metric on the lowest nonzero piece in the Hodge filtration of any VHS has the minimal extension property.
%\end{ex}

\section{Proofs}\label{section:proof}

In this section, we will work with the following set up:

\begin{setup}\label{setup for the proof} -
\begin{itemize}
\item Let $X$ be a complex manifold of dimension $n$ and let $Z$ be a closed analytic subset of $X$. 

\item Let $\cV$ be a VHS over $U\colonequals X\setminus Z$ and let $\cM$ be the polarized Hodge module on $X$ with strict support $X$ corresponding to $\cV$ via Saito's structure theorem~\ref{thm:structure}. Since any VHS away from codimension 2 automatically extends, we can assume that $Z=D$ is a divisor on $X$. 

\item Let $F^q\cV$ be the lowest nonzero piece in the Hodge filtration of $\cV$ and $h$ be the Hodge metric on $F^q\cV$.
\end{itemize}
\end{setup}
By \S \ref{section:HM}, $F_{-q}\cM$ is the lowest nonzero piece in the Hodge filtration of $\cM$ and 
\begin{eqnarray*}
F_{-q}\cM\big|_U= F^q\cV.
\end{eqnarray*} 
Moreover, $F_{-q}\cM$ is a torsion-free coherent sheaf by \cite[Proposition 2.6]{Saito90b}.

\subsection{Proof of Corollary \ref{cor:minimal extension property}}
In this section, we prove Corollary \ref{cor:minimal extension property} assuming Theorem \ref{thm:main}. 
 
First we show that $h$ extends to a singular hermitian metric on $F_{-q}\cM$ with
semi-positive curvature. Let $h^{\ast}$ denote the induced metric on the dual bundle
$(F^q \cV)^{\ast}$. It is enough to prove that $h^{\ast}$ extends to a singular
hermitian metric on the dual sheaf $(F_{-q}\cM)^{\ast}$ with semi-negative curvature,
since the two metrics determine each other by \cite[Prop.~17.2]{HPS}. Let $g$ be a
nontrivial holomorphic section of the dual sheaf $(F_{-q}\cM)^{\ast}$, defined over
some open subset $V \subseteq X$. Since the Hodge metric on $F^q \cV$ is Nakano
semi-positive (in particular Griffiths semi-positive) by Example~\ref{ex:Schmid}, the
function 
\[ 
	\psi\colonequals \log |g|_{h^{\ast}}: V\cap U \to (-\infty,+\infty)
\] 
is pluri-subharmonic on $V \cap U$. To show that $h^{\ast}$ extends to a singular
metric, it suffices to show that $\psi$ extends to a pluri-subharmonic function on
all of $V$. For that, we only need to prove that $\psi$ is bounded from above near any
point $x \in (D \setminus D_{\mathrm{sing}}) \cap V$; we can then apply the Riemann
extension theorem, followed by Hartog's theorem (because $D_{\mathrm{sing}} \cap V$
has codimension $\geq 2$).

Since $D \setminus D_{\mathrm{sing}}$ is smooth, the restriction of $F_{-q}\cM$ to
the open set $(D \setminus D_{\mathrm{sing}}) \cap V$ is isomorphic to $F^q
\cV^{>-1}$, where $\cV^{>-1}$ is Deligne's canonical extension. In suitable local
coordinates $s_1, \dotsc, s_n$ centered at the point $x$, the divisor $D \setminus
D_{\mathrm{sing}}$ is defined by $s_1 = 0$. Using Lemma \ref{lemma:refined version of
Schmids Theorem}, we can find a local holomorphic frame
$\{\sigma_{\alpha}\}_{\alpha}$ for the bundle $F^q \cV^{>-1}$ such that the pointwise
Hodge norm squared of any section $\sigma = \sum_{\alpha} f_{\alpha} \sigma_{\alpha}$
is uniformly bounded from below by
\[ 
	|\sigma|_h^2 \geq C \sum_{\alpha} |f_{\alpha}|^2,
\]
where $C > 0$ is a positive constant; the point is that the exponent $n_{\alpha}$ of the $(-\log |s_1|)$ is $ \geq 0$.
Consequently, 
\[
	\frac{|g(\sigma)|^2}{|\sigma|_h^2} \leq 
	\frac{\sum_{\alpha} |g(\sigma_{\alpha})|^2 \sum_{\alpha} |f_{\alpha}|^2}%
	{C \sum_{\alpha} |f_{\alpha}|^2}
	\leq C^{-1} \sum_{\alpha} |g(\sigma_{\alpha})|^2,
\]
and because $g$ is a holomorphic section of the dual sheaf, the expression on the
right-hand side is bounded independently of $\sigma$. Since $\sigma$ is an arbitrary
holomorphic section of $F^q \cV^{>-1}$, it follows that $|g|_{h^{\ast}}^2$, and
therefore $\psi$, is bounded from above near $x$, as claimed.

Next we show that $h$ has the minimal extension property in the sense of Definition \ref{MEP}. %   %Alternatively, one can use the result of Brunebarbe \cite{Brunebarbe} to conclude that $h$ has semi-positive curvature.
%We would like to verify two conditions. 
Note that $D$, being a divisor on $X$, automatically satisfies the condition (1) in Definition~\ref{MEP}. For condition (2), let us fix an embedding $\iota: B \hookrightarrow X$ with $x=\iota(0) \in X\setminus D$. Choose $v\in (F^q\cV)_x$ with $|v|_{h,x}=1$. Since $h$ is Nakano semi-positive over $B \setminus D$, Theorem \ref{thm:Guan-Zhou} implies that there exists a holomorphic section $\sigma \in H^0(B\setminus D,F^q\cV|_{B\setminus D})$ satisfying
\[ \sigma(x)=v \quad \text{and}  \quad \frac{1}{\mu(B)}\int_{B} |\sigma|^2_h \ d\mu \leq 1. \]
Now we can apply Theorem \ref{thm:main} to conclude that $\sigma$ extends to a holomorphic section in $H^0(B, \iota^\ast(F_{-q}\cM))$. Therefore, $h$ has the minimal extension property.

\subsection{Proof of Theorem \ref{thm:main}: first half}\label{sec:first half}
Because Theorem \ref{thm:main} is a local statement, we can assume $X=B$ is a ball in $\mathbb{C}^n$ and $D\subseteq X$ is an arbitrary divisor. Denote $j:B\setminus D \hookrightarrow B$ to be the open embedding. We would like to prove the first half of Theorem \ref{thm:main}: let 
\[ \sigma \in H^0(B\setminus D,F^q\cV|_{B\setminus D})\] be a holomorphic section locally $L^2$ near $D$, then 
\[ \sigma \in H^0(B, F_{-q}\cM).\]

\textbf{Step 1}. We reduce to the situation where $D$ is a simple normal crossing divisor. Let 
\[ f: (\tilde{B},\tilde{D}) \to (B,D) \]
be a log resolution such that $\tilde{B}\setminus \tilde{D} \cong B\setminus D$ and $\tilde{D}$ is a simple normal crossing divisor. The VHS $\cV$ pulls back isomorphically to a VHS on $\tilde{B}\setminus \tilde{D}$ which is denoted by $\tilde{\cV}$. Abusing notations, we use $h$ to denote the Hodge metric on $\tilde{\cV}$. Let $\tilde{\cM}$ be the Hodge module on $\tilde{B}$ uniquely determined by $\tilde{\cV}$ via Saito's Structure Theorem \ref{thm:structure}.

Fixing coordinates $s_1,\ldots,s_n$ on $B$, then $\beta\colonequals \sigma\otimes ds_1\wedge \cdots \wedge ds_n$ is a section of $H^0(B\setminus D,F^q\cV\otimes \omega_{B\setminus D})$. Define $\beta \wedge \overline{\beta} \colonequals \abs{\sigma}^2_h d\mu$, the $L^2$ condition on $\sigma$ implies that \begin{eqnarray}\label{eqn:pull back}
\int_{B\setminus D} \beta \wedge \overline{\beta}< \infty.
\end{eqnarray}
Since $f$ induces an isomorphism $\tilde{B}\setminus \tilde{D} \cong B\setminus D$, we can view $\beta$ as a section of $H^0(\tilde{B}\setminus \tilde{D},F^q\tilde{\cV}\otimes \omega_{\tilde{B}\setminus\tilde{D}})$.
%In particular, $f^{\ast}\beta$ is locally $L^2$ near $\tilde{D}$. This means that for any coordinate ball $B' \subset \tilde{B}$ with local coordinates $s_1',\ldots,s_n'$, if we write $f^{\ast}\beta=\sigma'\otimes ds_1'\wedge \cdots \wedge ds_n'$, then
%\[ \int_{B'}  \abs{\sigma'}^2_h d\mu' < \infty. \]
In Step 2, we will show that the integrability condition \eqref{eqn:pull back} implies that 
\[ \beta \in H^0(\tilde{B},F_{-q}\tilde{\cM}\otimes \omega_{\tilde{B}}).\] 
By the direct image theorem, the identification (\ref{eqn:direct image}) says 
\[ H^0(\tilde{B},F_{-q}\tilde{\cM}\otimes \omega_{\tilde{B}})=H^0(B,F_{-q}\cM\otimes \omega_B). \]
We conclude that $\beta \in H^0(B,F_{-q}\cM\otimes \omega_B)$. In particular, 
\[ \sigma \in H^0(B,F_{-q}\cM).\]

\textbf{Step 2}. Denote $\tilde{j}:\tilde{B}\setminus \tilde{D} \hookrightarrow \tilde{B}$ to be the open embedding. Working locally, we may replace $\tilde{B}$ by $\D^{n}$ and $\tilde{D}$ by $\D^{n}\setminus (\D^{\ast})^n$. Let $s_1,\ldots,s_n$ be coordinates on $\D^n$. We view $\beta$ as a section over $\tilde{B}\setminus \tilde{D}$ and write $\beta=\tilde{\sigma} \otimes ds_1 \wedge \ldots \wedge ds_n$. Here $\tilde{\sigma}\colonequals \sigma$ is viewed as a section in $H^0((\Delta^{\ast})^n,F^q\tilde{\cV})$. The condition (\ref{eqn:pull back}) implies that
\[ \int_{(\D^{\ast})^n} |\tilde{\sigma} |^2_h \ d\mu < \infty, \]
Thus $\tilde{\sigma} \in H^0(\D^n,\tilde{\cV}^{> -1})$ by Proposition \ref{prop:integrable sections} (which will be proved below).
Over $\D^{n}$, by (\ref{eqn:filtration}) we have 
\[ F_{-q}\tilde{\cM}=\tilde{j}_{\ast}(F^{-q}\tilde{\cV})\cap \tilde{\cV}^{>-1}, \] 
then $\tilde{\sigma} \in H^0(\D^n, F_{-q}\tilde{\cM})$. Therefore,
\[ \beta \in H^0(\D^n,F_{-q}\tilde{\cM}\otimes \omega_{\D^n}).\]

\subsection{$L^2$ sections are holomorphic}
In this section, we give the main technical result:

\begin{prop}\label{prop:integrable sections}
Let $\cV$ be a VHS over $(\D^{\ast})^n$, let $F^q\cV$ be the lowest nonzero piece in the Hodge filtration, and let $h$ be the Hodge metric on $F^q\cV$.
If a section $\sigma \in H^0((\D^{\ast})^n,F^q\cV)$ satisfies
\[ \int_{(\D^{\ast})^n} |\sigma |^2_h \ d\mu < \infty, \]
then ${\sigma} \in H^0(\D^n,\cV^{> -1})$, where $j:(\D^{\ast})^n \to \D^n$ is the open embedding.
\end{prop}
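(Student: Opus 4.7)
The plan is to perform two standard reductions and then finish by a Laurent–Fourier analysis against the Cattani–Kaplan–Schmid asymptotic of the Hodge metric.

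First, I would reduce to the case of unipotent monodromy by pulling back along a finite ramified cover $\pi\colon \D^n\to\D^n$, $s_j\mapsto s_j^e$, where $e$ is a common denominator of the eigenvalues of the semisimple parts of $T_1,\dots,T_n$. The $L^2$ hypothesis is preserved (the Jacobian is bounded), and since the Deligne lattice is canonical, the conclusion descends via Galois invariance. Next, by Riemann's extension theorem it suffices to extend $\sigma$ holomorphically across each smooth component $\{s_j=0\}$ of the boundary divisor separately; Fubini reduces us to the $1$-dimensional case over $\D^\ast$, with the remaining coordinates acting as parameters.

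In the $1$-dimensional unipotent case, choose a Deligne–$\mathrm{sl}_2$ splitting $V=\bigoplus_l V_l$ of the space $V$ of flat multi-valued sections, so that $N(V_l)\subset V_{l-2}$ and the $V_l$ are mutually orthogonal under the inner product $Q$ of Corollary~\ref{cor:one dimension hodge metric}. Writing
\[
\sigma(z)=e^{zN}\hat\sigma(s),\qquad \hat\sigma(s)=\sum_{m\in\mathbb{Z}}a_m s^m,\qquad a_m=\sum_l a_m^l,\ a_m^l\in V_l,
\]
we have $\sigma\in H^0(\D,\V^{>-1})=H^0(\D,\V^0)$ if and only if $a_m=0$ for all $m\le -1$. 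Corollary~\ref{cor:one dimension hodge metric} gives $|\sigma(z)|^2_h\asymp\sum_l(-\log|s|)^l|P_l(e^{zN}\hat\sigma(s))|^2_Q$. Expanding $e^{zN}$ as a polynomial in $z$, using the $\mathrm{sl}_2$-structure (together with AM–GM) to dominate cross-weight interactions, and applying Parseval in $\arg s$, one arrives at a diagonal lower bound
\[
\int_{\D^\ast}|\sigma|^2_h\,d\mu\;\gtrsim\; \sum_{l,m}|a_m^l|^2_Q\int_0^\epsilon r^{2m+1}(-\log r)^l\,dr.
\]
The constraint $\sigma\in F^q\V$, via Schmid's nilpotent orbit theorem, asymptotically forces $\hat\sigma$ to land in the limit Hodge filtration $F^q_\infty$ of the polarized limit mixed Hodge structure, which meets only those weight strata $l$ for which the above log-integral diverges whenever $m\le -1$. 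Hence $a_m^l=0$ for all $m\le -1$, as required.

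The main obstacle is this last step: the polynomial factors from $e^{zN}$, the weight-dependent logarithmic factors from CKS, and the $F^q$-constraint from Schmid's theorem all have to conspire so that divergence of the elementary log-integrals detects every admissible negative Laurent mode. The $F^q$-constraint is essential here, since in the purely flat setting one checks by a direct calculation that the lowest weight strata can carry $L^2$ sections with nontrivial negative Laurent modes; it is the combination of the Hodge-theoretic input from Schmid with the CKS asymptotic that rules this out.
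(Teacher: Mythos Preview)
Your overall strategy---reduce to unipotent monodromy via a finite cover, then to a single boundary component via Hartogs, then Laurent analysis against the CKS asymptotic---is exactly the paper's. (The unipotent reduction in the paper tracks the Jacobian factor $\prod_j \tilde s_j^{\,m_j-1}$ and the explicit monomial shift in the pullback of the generating sections of $\V^{>-1}$, rather than appealing to abstract Galois descent; but your sketch is fine in outline.)

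The substantive difference is the one-variable endgame. The paper does \emph{not} invoke the $F^q$-constraint there. It writes $\sigma=\sum_\alpha h_\alpha w_\alpha$ in a full frame of $\V^{>-1}$, deduces $\int(-\log|s_1|)^{l(\alpha)}|h_\alpha|^2\,d\mu<\infty$ for each $\alpha$ separately, and then applies the elementary Lemma~\ref{lem:L2holomorphic}: for \emph{any} integer $k$, finiteness of $\int_{\D^*}(-\log|s_1|)^k|f|^2\,d\mu$ over the \emph{full} punctured disk forces $f$ to extend across $0$. Indeed $\int_0^1 r^{2m+1}(-\log r)^k\,dr$ diverges for every $m\le -1$: at $r=0$ when $m\le -2$, and at $r=1$ when $m=-1$ and $k\le -2$. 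So in the paper's framing your ``lowest weight strata'' worry never materializes, and no Hodge-theoretic input beyond the CKS norm estimate is used.

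Your route via the limit mixed Hodge structure is a genuine alternative and is arguably more robust, since the CKS estimate only holds on a subdisk $\D^*_r$, where $(-\log|s_1|)^k$ stays bounded at the outer boundary and the $r=1$ divergence mechanism of Lemma~\ref{lem:L2holomorphic} is not directly available after rescaling. If you pursue it, the precise fact you need is the transversality $F^q_\infty\cap W_{-1}=0$ (immediate from $F^{q+1}_\infty=0$ together with $N$ being a morphism of type $(-1,-1)$ on the limit MHS), not the stronger claim that $F^q_\infty$ has vanishing \emph{components} in low weight strata. The diagonal bound by itself already forces $a_m=0$ for $m\le -2$ and $a_{-1}\in W_{-2}$; then, since $\hat\sigma$ is a section over $\D^*$ of the extended subbundle $F^q(\V^{>-1})$, its leading Laurent coefficient lies in the fiber $F^q_\infty$ at $0$, and $F^q_\infty\cap W_{-2}=0$ gives $a_{-1}=0$. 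Your phrase ``asymptotically forces $\hat\sigma$ to land in $F^q_\infty$'' should be sharpened to this statement about the leading coefficient.
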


We will first prove a reduction lemma.
\begin{lemma}\label{lem:reduction}
If the Proposition \ref{prop:integrable sections} holds for VHS with unipotent monodromy, then it holds for VHS with quasi-unipotent monodromy.
\end{lemma}

\begin{proof}
Let $\cV$ be a VHS with quasi-unipotent monodromy over $(\D^{\ast})^n$ with $\rank\cV=r$. By Hartog's theorem, to show any given section $\sigma$ with the integrability condition lies in $H^0(\D^n,\cV^{>-1})$, it suffices to show that $\sigma$ extends over any generic point of each divisor $D_j$. Without loss of generality, we only need to prove this for $D_1$. Around a generic point of $D_1$, we can choose a coordinate neighborhood $U_1$ such that it is isomorphic to $\D^{\ast} \times \D^{n-1}$. Let $T$ be the monodromy operator of $\cV|_{U_1}$ with Jordan decomposition $T=T_sT_u$. Let $m$ be the quasi-unipotency index of $T$ such that $T_s^{m}=\mathrm{Id}$. 

Let $p$ be the universal covering map of $\D^{\ast}\times \Delta^{n-1}$:
\[ p: \mathbb{H} \times \Delta^{n-1} \to \D^{\ast} \times \Delta^{n-1}, \quad (z_1,z_2,\ldots,z_n) \mapsto (e^{2\pi \sqrt{-1}z_1},z_2,\ldots,z_n).\]
Set $V\colonequals H^0(\mathbb{H}\times \Delta^{n-1},p^{\ast}\cV)^{p^\ast \nabla}$, which is isomorphic to a fiber of $\cV$ as vector spaces. Choose a basis $(v_\alpha)_{1\leq \alpha \leq r}$ of $V$ which diagonalizes $T_s$. We choose integers $0\leq k_{\alpha} \leq m-1$ such that 
\[ T_s \cdot v_{\alpha} = e^{-2\pi \sqrt{-1}k_{\alpha }/m}\cdot v_{\alpha}.  \]
By Construction~\ref{def:log}, let $R \colonequals \log T$ with the decomposition 
\[ R=H + N, \]
where $H=\log T_s$ and $N=\log T_u$. By the construction we have
\[ H \cdot v_{\alpha} = -(2\pi \sqrt{-1}k_{\alpha}/m)\cdot v_{\alpha}, \]
and $N$ is nilpotent.

To reduce to the unipotent situation, consider the following unramified covering map: 
\[ f: \D^{\ast} \times \Delta^{n-1} \to \D^{\ast}\times \Delta^{n-1}, (\tilde{s}_1,\tilde{s}_2,\ldots,\tilde{s}_n) \mapsto (\tilde{s}_1^{m},\tilde{s}_2,\ldots,\tilde{s}_n).\]
Let $(\tilde{\cV},\tilde{\nabla}) \colonequals f^{\ast}(\cV,\nabla)$ to be the VHS polarized by $f^{\ast}h$. Note that $(\tilde{\cV}, \tilde{\nabla})$ has unipotent monodromy because the monodromy operator $\tilde{T}$ equals to $T^{m}=T_u^{m}$. Set $\tilde{R} \colonequals \log \tilde{T}$, then
$\tilde{R}=mN$. The finite base change comes with the following diagram:
\[ \begin{CD}
\mathbb{H}\times \Delta^{n-1} @>\tilde{f}>> \mathbb{H}\times \Delta^{n-1} \\
@V\tilde{p}VV             @V{p}VV \\
\D^{\ast}\times \Delta^{n-1} @>{f}>>   \D^{\ast}\times \Delta^{n-1}
\end{CD}
\]
where 
\begin{eqnarray*}
\tilde{p}(\tilde{z}_1,\tilde{z}_2,\ldots, \tilde{z}_n)=(e^{2\pi\sqrt{-1}\tilde{z}_1},\tilde{z}_2,\ldots, \tilde{z}_n),\\
\tilde{f}(\tilde{z}_1,\ldots,\tilde{z}_n)=(m\tilde{z}_1,\tilde{z}_2,\ldots,\tilde{z}_n).
\end{eqnarray*}
Let $\tilde{V}$ to be $H^0(\mathbb{H}\times \Delta^{n-1},\tilde{p}^\ast\tilde{\cV})^{\tilde{p}^\ast \tilde{\nabla}}$.
Then as in section \ref{section:can},
\[ \cV^{>-1} \cong (e^{z_1R}V)\otimes_{\mathbb{C}} \mathcal{O}_{\D^n},  \quad  \tilde{\cV}^{>-1} \cong (e^{\tilde{z}_1\tilde{R}}\tilde{V})\otimes_{\mathbb{C}} \mathcal{O}_{\D^n}.\]

Now we want to calculate the pull back of generating sections of $\cV^{>-1}$ via $f$. Since they are identified with holomorphic functions on $\mathbb{H}\times \Delta^{n-1}$, it suffices to pull back via $\tilde{f}$. Let $v\in V$ and $w=e^{z_1R}v$, then 
\begin{eqnarray*} 
\tilde{f}^{\ast}(w) &=&e^{\tilde{z}_1mR}\tilde{f}^{\ast}v = e^{\tilde{z}_1mH}\cdot e^{\tilde{z}_1mN}\tilde{f}^{\ast}v \\
&=& e^{\tilde{z}_1mH}\cdot e^{\tilde{z}_1\tilde{R}}\tilde{f}^{\ast}v = e^{\tilde{z}_1mH}\tilde{w}.
\end{eqnarray*}
where $\tilde{w}=e^{\tilde{z}_1\tilde{R}}\tilde{f}^{\ast}v \in H^0(\Delta^{\ast}\times \D^{n-1},\tilde{\cV}^{>-1})$.

Since $(v_{\alpha})_{1\leq \alpha \leq r}$ is the basis of $V$ such that
\begin{eqnarray*} 
H \cdot v_{\alpha} = -(2\pi \sqrt{-1}k_{\alpha}/m)\cdot v_{\alpha}.
\end{eqnarray*}
Set $w_{\alpha}=e^{z_1R}v_{\alpha}$, then
\begin{eqnarray}\label{eqn:pullback canonical extension}
\tilde{f}^{\ast}(w_{\alpha}) = e^{\tilde{z}_1mH}\tilde{w}_{\alpha}= e^{-2\pi\sqrt{-1}\tilde{z}_1k_{\alpha }}\tilde{w}_{\alpha}= \tilde{s}_1^{-k_{\alpha }} \tilde{w}_{\alpha}.
\end{eqnarray}
where $\tilde{w}_{\alpha}=e^{\tilde{z}_1\tilde{R}}\tilde{f}^{\ast}v_{\alpha}$. 
\begin{remark}
The generating sections of $\cV^{>-1}$ pull back to generating sections of $\tilde{\cV}^{>-1}$ with an extra monomial factor.
\end{remark}

Now we start the proof of Lemma \ref{lem:reduction}. Let $\sigma$ be a section of $F^q\cV$ over $\Delta^{\ast}\times \Delta^{n-1}$ such that $\int_{\Delta^{\ast}\times \Delta^{n-1}} |\sigma|^2_h \ d\mu < +\infty$. By the construction of Deligne's canonical lattice, 
\[ H^0\left(\Delta^{\ast}\times \Delta^{n-1}, F^q\cV \right) \subset H^0\left(\Delta^{\ast}\times \Delta^{n-1}, \cV^{>-1}|_{\Delta^{\ast}\times \Delta^{n-1}}\right). \]
Therefore we can write 
\[ \sigma= \sum_{\alpha = 1}^{r} h_{\alpha}w_{\alpha}, \] 
where $h_{\alpha}$ is a holomorphic function over $\Delta^{\ast}\times \Delta^{n-1}$. To show $\sigma \in H^0(\D^{n},\cV^{>-1})$, it suffices to show that each $h_\alpha$ is actually a holomorphic function over $\D^n$.

Since $\int_{\Delta^{\ast}\times \Delta^{n-1}} \abs{\sigma}^2_h \ d\mu < +\infty$, local calculation shows that 
\[ \int_{\Delta^{\ast}\times \Delta^{n-1}} f^{\ast}(\abs{\sigma}^2_h \ d\mu) = \int_{\Delta^{\ast}\times \Delta^{n-1}} \big|m\tilde{s}_1^{m-1} \cdot f^\ast(\sigma)\big|^2_{f^\ast h} d\tilde{\mu} < +\infty.\]
Here 
\[ d\mu=c_n ds_1\wedge d\bar{s}_1\wedge \cdots \wedge ds_n\wedge d\bar{s}_n,  d\tilde{\mu}=c_n d\tilde{s}_1\wedge d\bar{\tilde{s}}_1\wedge \cdots \wedge d\tilde{s}_n\wedge d\bar{\tilde{s}}_n\]
are the standard Lebesgue measure on $\D^n$ and $c_n=2^{-n}(-1)^{n^2/2}$.
Since we assume that Proposition \ref{prop:integrable sections} is true for VHS with unipotent monodromy, it follows that
\[ \tilde{s}_1^{m-1} \cdot f^\ast(\sigma)  \]
must belongs to $H^0(\D^n, \tilde{\cV}^{>-1})$, which equals to $H^0(\D^n,\tilde{\cV}^0)$ by remark \ref{remark:unipotent}.
By (\ref{eqn:pullback canonical extension}),
\begin{eqnarray*}
f^\ast(\sigma) &=& \sum_{\alpha = 1}^{r} f^{\ast}(h_{\alpha}w_{\alpha}) \\
             &=& \sum_{\alpha= 1}^{r}  f^{\ast}(h_{\alpha}) \cdot \tilde{s}_1^{-k_{\alpha}}\tilde{w}_{\alpha}
\end{eqnarray*}
Hence
\[  \tilde{s}_1^{m-1} \cdot f^\ast(\sigma)
= \sum_{\alpha= 1}^{r} \left(f^{\ast}(h_{\alpha}) \cdot \tilde{s}_1^{m-1-k_{\alpha }}\right)\tilde{w}_{\alpha}. \]
Therefore we conclude that for each $1\leq \alpha \leq r$, 
the corresponding coefficient function
\[ f^{\ast}(h_{\alpha}) \cdot \tilde{s}_1^{m-1-k_{\alpha }} \]
is holomorphic in $\tilde{s}_1,\ldots,\tilde{s}_n$. In particular, this implies that $h_{\alpha}$ must be a meromorphic function. For each $\alpha$, we let $n_{\alpha }$ be the lowest power of $s_1$ in the Laurent expansion of $h_{\alpha}$, then the lowest power of $\tilde{s}_1$ in $f^{\ast}(h_{\alpha})$ is $mn_{\alpha}$. Holomorphicity imply that
\[  mn_{\alpha }+m-1-k_{\alpha } \geq 0. \]
Because $k_{\alpha }\geq 0$, we have
\[ n_{\alpha} \geq -1+\frac{1+k_{\alpha }}{m} \geq  -1+\frac{1}{m}, \]
Since ${n_{\alpha }}$ is an integer, it follows that $n_{\alpha }\geq 0$, i.e. $h_{\alpha}$ is actually a holomorphic function in $s_1,\ldots,s_n$. Therefore
we conclude that $\sigma \in H^0(\D^n,\cV^{>-1})$.

\end{proof}

\begin{proof}[Proof of Proposition \ref{prop:integrable sections} ]

Let $\cV$ be a VHS over $(\D^{\ast})^n$ and $D\colonequals \D^n\setminus (\D^{\ast})^n=\cup D_j$ is a simple normal crossing divisor. By Borel's monodromy theorem \cite[Lemma 4.5]{Schmid}, $(\cV,\nabla)$ has quasi-unipotent monodromy. Then we can assume that $(\cV,\nabla)$ has unipotent monodromy by Lemma~\ref{lem:reduction}. Let $\sigma$ be a section of $H^0((\D^{\ast})^n, F^q\cV)$ such that 
\[ \int_{(\D^{\ast})^n} |\sigma|^2_h  d\mu < +\infty.\]
By Hartog's theorem, to show $\sigma \in H^0(\D^n,\cV^{>-1})$, it suffices to show that $\sigma$ extends over any generic point of each divisor $D_j$. 

Without loss of generality, we only need to prove this for $D_1$.  Around a generic point of $D_1$, we can choose a coordinate neighborhood $U_1$ such that it is isomorphic to $\D^{\ast} \times \D^{n-1}$. Let $s_1$ be the coordinate of $\D^{\ast}$. Let $T$ be the monodromy operator of $\cV|_{U_1}$ and $N \colonequals \log T$.  Let $p: \mathbb{H}\times \D^{n-1} \to \D^{\ast} \times \D^{n-1}$ be the universal covering map and let $\Phi: \mathbb{H}\times \D^{n-1} \to \mathbf{D}$ be the corresponding period map. Let $V=H^0(\mathbb{H}\times \D^{n-1}, p^{\ast}\cV)^{p^{\ast}\nabla}$ and let $W_{\bullet}V$ be the weight filtration associated with $N$. 

Let $\D^{\ast}_r \colonequals \{ s_1\in \D^{\ast} \colon \abs{s_1}<r\}$ be the punctured disk of radius $r<1$. Choose $\{\sigma_\alpha\}$ to be a frame section of $F^q\cV^{>-1}$ as in Lemma \ref{lemma:refined version of Schmids Theorem}. Write
\[ \sigma\big|_{U_1} = \sum_\alpha h_\alpha \sigma_\alpha,\]
where $h_\alpha$ is a holomorphic function over $\D^{\ast}_r \times \D^{n-1}$. It suffices to prove that $h_\alpha$ extends to a holomorphic function over $\D_r \times \D^{n-1}$.

Since $\sigma$ is a section of the smallest nonzero piece of $F^q\cV^{>-1}$ over $\Delta_r^{\ast}\times \Delta^{n-1}$, Theorem \ref{thm:one dimension hodge metric} and Lemma \ref{lemma:refined version of Schmids Theorem} imply that there is a positive constant $C_1$ such that over $\D^{\ast}_r \times \D^{n-1}$
\[ \abs{\sigma}^2_{h} \geq C_1\sum_{n_\alpha\geq 0} |h_\alpha|^2(-\log \abs{s_1})^{n_{\alpha}}. \]

%Since $N$ is nilpotent and $\abs{s_1}$ is bounded, there exists a constant $C_2>0$ such that for all $l$,
%\[ \abs{p^\ast \sigma}^2_{Q_l} \geq C_2\abs{e^{-z_1N}p^{\ast}\sigma}^2_{Q_l}=C_2\big|\sum_{\alpha} h_\alpha v_\alpha\big|^2_{Q_l}. \]
Therefore
\begin{align*}
 \infty &> \int_{\D^{\ast}_r \times \D^{n-1}} \abs{\sigma}^2_h d\mu\\
          &>C_1\int_{\D^{\ast}_r \times \D^{n-1}} \sum_{n_{\alpha} \geq 0} |h_\alpha|^2(-\log \abs{s_1})^{n_{\alpha}}  d\mu.
\end{align*}
%Since $(v_\alpha)$ is $Q$-orthogonal, for each $\alpha$ there exists $l\geq 0$ such that
%\[ \int_{\D^{\ast}_r \times \D^{n-1}} (-\log \abs{s_1})^l \abs{h_\alpha}^2  d\mu < \infty.\]
We can rescale $r$ to $1$, then by Lemma \ref{lem:L2holomorphic}, $h_\alpha$ is holomorphic over $\D_r\times \D^{n-1}$. Therefore $\sigma$ extends to a holomorphic section of $\cV^{>-1}$ over $\D^{n}$.

\end{proof}

To conclude the proof, we need the following
\begin{lemma}\label{lem:L2holomorphic}
Let $f$ be a holomorphic function on $\D^{\ast}\times \D^{n-1}$. Let $s_1$ be the coordinate of $\D^{\ast}$. Suppose there exists an integer $k\geq 0$ such that 
\[ \int_{\D^{\ast}\times \D^{n-1}} (-\log \abs{s_1})^k\cdot \abs{f}^2 d\mu <\infty, \]
Then $f$ must be a holomorphic function on $\D \times \D^{n-1}$.
\end{lemma}

\begin{proof}
We denote $d\mu_n=c_n ds_1\wedge d\bar{s}_1\wedge \cdots \wedge ds_n\wedge d\bar{s}_n$ to be the standard Lebesgue measure on $\D^n$ and $c_n=2^{-n}(-1)^{n^2/2}$. Then $d\mu_n=d\mu_1\times d\mu_{n-1}$.
Expand $f$ as a Laurent series in $s_1$:
\[ f = \sum_{i\in \mathbb{Z}} f_is_1^i,\]
where $f_i$ is a holomorphic function on $\D^{n-1}$.

To simplify the presentation, we denote $-\log \abs{s_1}$ by $L(s_1)$. Let $(r,\theta)$ be the polar coordinate such that $s_1=re^{\sqrt{-1}\theta}$. Observe that $L(s_1)$ is a function only depending on $\abs{s_1}=r$, then for any integers $i,j$ such that $i\neq j$,
\[ \int_{\D^{\ast}} L(s_1)^k s_1^i\bar{s}_1^j d\mu_1 = \int_{0}^1 L(r)^kr^{i+j+1} dr \int_{0}^{2\pi} e^{\sqrt{-1}\theta(i-j)}d\theta =0. \] 
Since the Laurent series converges on any anulus $\{ \epsilon_1 \leq \abs{s_1} \leq \epsilon_2 \} \times \D^{n-1}$, so
\begin{eqnarray*}
\int_{\D^{\ast}\times \D^{n-1}} L(s_1)^k\cdot \abs{f}^2 d\mu &=& \int_{\D^{n-1}}\left(\int_{\D^{\ast}} L(s_1)^k \cdot \sum_{i,j} s_1^i\bar{s}_1^j d\mu_1\right)f_i\bar{f}_j d\mu_{n-1} \\
&=& \sum_{i\in \mathbb{Z}} \left(\int_{\D^{\ast}} L(s_1)^k \abs{s_1}^{2i}d\mu_1\right)\cdot \left( \int_{\D^{n-1}}\abs{f_i}^2 d\mu_{n-1}\right) \\
\end{eqnarray*}
Since $\int_{\D^{\ast}} L(s_1)^k\abs{s_1}^{2i} d\mu < \infty$ if and only if $i\geq 0$ and $\int_{\D^{n-1}}\abs{f_i}^2 d\mu_{n-1}=0$ if and only if $f_i\equiv 0$, we must have $f_i \equiv 0$ for all  $i <0$.
In particular we conclude that $f$ is holomorphic over $\D\times \D^{n-1}$.

\end{proof}

\subsection{Proof of Theorem \ref{thm:main}: second half} 
In this section, we prove the second half of Theorem \ref{thm:main}. We work with Set-up \ref{setup for the proof}. Let $j:X\setminus D\to X$ to be the open embedding. Denote $\mathcal{F}$ to be the subsheaf of $j_{\ast}(F^q\cV)$ consisting of sections $\sigma$ of $F^q\cV$ which are locally $L^2$ near $D$. This means that for any point in $D$, there exists a coordinate neighborhood $U$ such that $\abs{\sigma}_h^2$ is integrable on $U\setminus D$ with respect to the standard Lebesgue measure on $U$. Recall that $\cM$ is the Hodge module on $X$ associated to $\cV$. In \S \ref{sec:first half}, we have proved that $\mathcal{F} \subseteq F_{-q}\cM$. It suffices to show 
\begin{eqnarray}\label{eqn:supset}
F_{-q}\cM\subseteq \mathcal{F} .
\end{eqnarray}
%In particular $\mathcal{F} =F_{-q}\cM$. Since the filtered $D$-module $(\cM,F_{\bullet}\cM)$ underlying $\cM$ is good, $\mathcal{F}$ must be coherent.
Since (\ref{eqn:supset}) is a local statement, we can assume $X=\D^n$. Let $\alpha \in H^0(X,F_{-q}\cM)$. Since $F_{-q}\cM|_{U}=F^{q}\cV$, we can restrict the section $\alpha$ to a section $\sigma \in H^0(X\setminus D, F^q\cV)$. Moreover, $\sigma$ uniquely determines $\alpha$ because $F_{-q}\cM$ is torsion-free.  We will show that $\abs{\sigma}^2_h$ is integrable on $X$ with respect to the Lebesgue measure.
 
\textbf{Step 1}. If $D$ is a simple normal crossing divisor, choose $s_1,\ldots,s_n$ to be the coordinates on $X$ such that $D=\cup \{ s_i=0 \}$. By the asymptotics of Hodge metrics proved by Cattani-Kaplan-Schmid \cite{CKS} (a precise formulation can be found at \cite[Theorem 5.1]{CK82}), over any region of the form
\[ \{\underline{s}=(s_1,\ldots,s_n) \in (\D^{\ast})^n : |\underline{s}|< a <1; 
\frac{\log|s_j|}{\log|s_{j+1}|} \geq \epsilon, 1\leq j \leq n-1 \},\]
$\abs{\sigma}^2_h$ is bounded above by sums of products of logarithm functions in $\abs{s_i}$. In particular, it is integrable with respect to the standard Lebesgue measure on this region. Since we can cover the neighborhood of any point of $D$ using finite regions of this type, it follows that $\abs{\sigma}^2_h$ is integrable on $X$ with respect to the Lebesgue measure.

\textbf{Step 2}. If $D$ is an arbitrary divisor, choose a log resolution 
\[ f: (\tilde{X},\tilde{D}) \to (X,D),\]
such that $\tilde{D}=f^{\ast}(D)$ is simple normal crossing and $\tilde{X}\setminus \tilde{D} \cong  X\setminus D$. The VHS $\cV$ pulls back isomorphically to a VHS $\tilde{\cV}$ on $\tilde{X}\setminus \tilde{D}$. Abusing notations, we use $h$ to denote the Hodge metric on $\tilde{\cV}$. Let $\tilde{\cM}$ be the Hodge module on $\tilde{X}$ uniquely determined by $\tilde{\cV}$ via Saito's Structure Theorem \ref{thm:structure}. Let $s_1,\ldots,s_n$ be the standard coordinates on $X=\Delta^n$ and set $\xi\colonequals \alpha\otimes ds_1\wedge \cdots \wedge ds_n$ to be the section in $H^0(X,F_{-q}\cM\otimes \omega_X)=H^0(\tilde{X},F_{-q}\tilde{\cM}\otimes \omega_{\tilde{X}})$, where the equality follows from the direct image Theorem \eqref{eqn:direct image}. Over $X\setminus D$, since the restriction of $\alpha$ is $\sigma$, we define $\xi\wedge \overline{\xi}\colonequals \abs{\sigma}_h^2 d\mu$, where $d\mu$ is the standard Lebesgue measure. Our goal is to show that the integral
\[ \int_{X\setminus D}\abs{\sigma}^2_hd\mu=\int_{X\setminus D}\xi\wedge \overline{\xi}\]
is finite. Since $\xi$ can be viewed as a section of $H^0(\tilde{X},F_p\tilde{\cM}\otimes \omega_{\tilde{X}})$, locally on a standard coordinate neighborhood $\Delta^n$ on $\tilde{X}$ with coordinates $\tilde{s}_1,\ldots,\tilde{s}_n$, we can write $\xi=\tilde{\alpha}\otimes d\tilde{s}_1\wedge \cdots \wedge d\tilde{s}_n$, such that $\tilde{\alpha}\in H^0(\tilde{X},F_{p}\tilde{\cM})$. Then the restriction of $\tilde{\alpha}$ to $\tilde{X}\setminus \tilde{D}$ is a section $\tilde{\sigma}\in H^0(\tilde{X},F^q\tilde{\cV})$. Since $f$ induces an isomorphism $\tilde{X}\setminus \tilde{D}\cong X\setminus D$, we have
\begin{equation}\label{eqn: integral on X tilde}
  \int_{X\setminus D}\xi\wedge \overline{\xi}=\int_{\tilde{X}\setminus \tilde{D}}\abs{\tilde{\sigma}}_h^2f^{\ast}(d\mu).
  \end{equation}
Because the log resolution map $f$ is a polynomial map, the measure $f^{\ast}(d\mu)$ is locally bounded by a multiple of the standard Lebesgue measure $d\tilde{\mu}$ on $\tilde{X}$. Using Step 1, we know that 
\[ \int_{\Delta^n\setminus \tilde{D}}\abs{\tilde{\sigma}}^2_hd\tilde{\mu} <+\infty\]
Since $\tilde{X}$ can be covered by finitely many such standard coordinate polydisks, it follows that the integral \eqref{eqn: integral on X tilde} is finite. Therefore, we conclude the proof of Theorem \ref{thm:main}.

\bibliographystyle{amsalpha}
\bibliography{mep}{}
\vspace{2cm}

\footnotesize{
\textsc{Department of Mathematics, Stony Brook University, Stony Brook, New York 11794} \\
\indent \textit{E-mail address:} \href{mailto:cschnell@math.stonybrook.edu}{cschnell@math.stonybrook.edu}

\vspace{\baselineskip}

\textsc{Department of Mathematics, Stony Brook University, Stony Brook, New York 11794} \\
\indent \textit{E-mail address:} \href{mailto:ruijie.yang@stonybrook.edu}{ruijie.yang@stonybrook.edu}

\end{document}